\def\UseSection{
        \numberwithin{equation}{section}
	\theoremstyle{plain}
        \newtheorem{theorem}    {Theorem}[section]
        \DefineTheorems 
}
\def\DefineTheorems{
	\newtheorem{lemma}      [theorem] {Lemma}
	\newtheorem{cor}        [theorem] {Corollary}
	\theoremstyle{definition}
	
}
\newcommand{\C}{{\mathbb{C}}}
\newcommand{\Ex}{{\mathbb{E}}}
\newcommand{\N}{{\mathbb{N}}}
\newcommand{\R}{{\mathbb{R}}}
\newcommand{\Z}{{\mathbb{Z}}}
\newcommand{\Scal}{{\cal S}}
 \title {
   Self-avoiding walk on the complete graph
 }
 \author{
   Gordon Slade\thanks{Department of Mathematics,
     University of British Columbia,
     Vancouver, BC, Canada V6T 1Z2.
     https://orcid.org/0000-0001-9389-9497.
     E-mail: {{\tt slade@math.ubc.ca}}}}
 \date{\vspace{-5ex}} 
\begin{document}
\maketitle

\begin{abstract}
There is an extensive literature concerning self-avoiding walk on infinite graphs, but the
subject is relatively undeveloped on finite graphs.  The purpose of this paper
is to elucidate the phase transition for self-avoiding walk on the simplest
finite graph:  the complete graph.
We make the elementary observation that the susceptibility of the self-avoiding walk
on the complete graph is given exactly in terms of the incomplete gamma function.
The known asymptotic behaviour of the incomplete gamma function then yields
a complete description of the finite-size scaling of the self-avoiding walk on
the complete graph.
As a basic example, we compute the limiting distribution of the length of a self-avoiding
walk on the complete graph, in subcritical, critical,
and supercritical regimes.
This provides a prototype for more complex unsolved problems such as
the self-avoiding walk on the hypercube or on a high-dimensional torus.
\end{abstract}

%

\section{Introduction and results}

\subsection{Self-avoiding walk}

The self-avoiding walk is a mathematical model of interest in
combinatorics, in probability, in statistical mechanics, and in polymer science
\cite{MS93}.
A basic problem in the subject is to count the number of
fixed-length non-self-intersecting paths
in a transitive graph, starting from some fixed origin.  The most common setting
is to take the graph to be the $d$-dimensional integer lattice $\Z^d$ with
nearest-neighbour edges.  A simple subadditivity argument implies that
the number $c_N$ of self-avoiding walks on $\Z^d$ of length $N$
satisfies $c_N^{1/N} \to \mu$ as $N\to\infty$, and also
$c_N \ge \mu^N$ (for all $N$).  The \emph{connective
constant} $\mu$ obeys $\mu \in [d,2d-1]$, and in particular
depends on the dimension $d$.  The generating function
$\chi_z = \sum_{N=0}^\infty c_Nz^N$ of the sequence $(c_N)$
is called the \emph{susceptibility}, and it has a finite
radius of convergence equal to the \emph{critical value} $z_c=\mu^{-1}$.
For $z\ge z_c$, the susceptibility is infinite and contains no information.
Many of the most important mathematical problems for the self-avoiding walk
remain unsolved, e.g., the precise asymptotic behaviour
on $\Z^2$ or $\Z^3$ of $c_N$ (as $N\to\infty$)
or the susceptibility (as $z\uparrow z_c$).

On a finite graph, $c_N$ is zero once $N$ reaches the cardinality
of the vertex set, so the susceptibility is a polynomial in $z$ and hence
is defined for all $z \in \C$.  For large but finite graphs, it can be expected
that the divergence of the susceptibility in the infinite setting
will be reflected by different
behaviour of the susceptibility above and below some \emph{critical scaling window}
of positive $z$ values---a phase transition.
It is further to be expected that on high-dimensional graphs this
transition should share qualitative features with the phase transition for
the Erd\H{o}s--R\'{e}nyi random graph.

In this paper, we elucidate the phase transition for self-avoiding walk
on the simplest finite
graph, the complete graph.  We observe that the
self-avoiding walk is exactly solvable on the complete graph,
in the sense that the susceptibility can be expressed exactly in terms of
the upper incomplete gamma function.  We use the known asymptotic
behaviour of the incomplete gamma function to derive the asymptotic behaviour
of the susceptibility.  This leads to an identification of
the critical scaling window for the phase
transition, and of the different behaviours that occur below, above, and
within the critical window.  In particular, with respect to
the probability distribution on the set of all self-avoiding walks (of any length)
which assigns probability proportional to $z^N$ to a walk of length $N$,
we identify the
different behaviours of the length of a random self-avoiding walk in the different regimes.

For self-avoiding walk on the complete graph, we show that the phase transition corresponds
exactly to a transition in the asymptotic behaviour of the incomplete gamma function.
Complete information and historical
background for the asymptotic behaviour of the
incomplete gamma function is provided in \cite{NO19},
from which the finite-size scaling of
the susceptibility can be read off.
The new results of \cite{NO19} are precisely in the region of most interest---what
they call the \emph{transition region} and what for us is the critical scaling
window---though
we do not use anything like the full power of the results of \cite{NO19}.
This is an interesting example of a connection between a transition in the
asymptotic behaviour of
a special function and a phase transition in the finite-size scaling
of a statistical mechanical model.

Although the complete graph lacks geometry, it is an example which serves as a
prototype for high-dimensional finite graphs which do have interesting
geometry, such as the hypercube $\{0,1\}^n$ with nearest-neighbour edges, or
high-dimensional discrete tori.  Percolation on the hypercube and high-dimensional
tori has been well-studied \cite{BCHSS05a,HH17book}, and it would be of interest
to develop the theory of self-avoiding walk on these graphs.

\subsection{Exact solution for susceptibility}

Let $K_n$ denote the complete graph on $n$ vertices.
An $N$-step self-avoiding walk on $K_n$ is a sequence of $N+1$ distinct vertices
in $K_n$.
Let $\Scal_N^{(n)}$ denote the set of $N$-step self-avoiding walks on the complete
graph $K_n$, started from some fixed origin.
The cardinality of $\Scal_N^{(n)}$ is $c^{(n)}_0=1$ and
\begin{equation}
    c^{(n)}_N = (n-1)(n-2)\cdots (n-N) = \frac{(n-1)!}{(n-1-N)!}
    \qquad (N=1,\ldots,n-1).
\end{equation}
The \emph{susceptibility} is the generating function for $c^{(n)}_N$, namely the
polynomial
\begin{align}
    \chi^{(n)}_z & = \sum_{N=0}^{n-1} c^{(n)}_N z^N
    =
    (n-1)!z^{n-1} \sum_{m=0}^{n-1} \frac{1}{m!z^m}.
\end{align}
For $x\ge 0$ and $s>0$, the upper incomplete gamma function is defined by
\begin{equation}
\label{e:igf}
    \Gamma(s,x) = \int_x^\infty t^{s-1} e^{-t} dt
    .
\end{equation}
Analytic continuation in both variables $s,x$ is possible.
For $n \in \N$,
\begin{equation}
    \Gamma(n,x) = (n-1)! e^{-x} \sum_{k=0}^{n-1}\frac{x^k}{k!}.
\end{equation}
This gives an exact formula for the susceptibility:
\begin{align}
\label{e:chigam}
    \chi^{(n)}_z & =
    z^{n-1} e^{1/z} \Gamma(n,1/z)
    .
\end{align}

\subsection{Random length}

For $z > 0$, we define a probability measure on $\Scal^{(n)}
= \cup_{N=0}^{n-1} \Scal_N^{(n)}$ by
assigning probability $z^{|\omega|}/\chi^{(n)}_z$ to $\omega \in \Scal^{(n)}$,
where $|\omega|$
denotes the number of steps in $\omega$.  Expectation with respect
to this measure is denoted $\Ex^{(n)}_z$.
Given $z$ and $n$, we define the random variable $L$ to be the length $|\omega|$
of a walk $\omega$ drawn
randomly from $\Scal^{(n)}$.

By definition,  and
by \eqref{e:chigam} and \eqref{e:igf}, the mean of $L$ is
\begin{align}
\label{e:L1}
    \Ex_z^{(n)} (L) & = \frac{1}{\chi^{(n)}_z}\sum_{N=0}^{n-1} Nc_N^{(n)}z^N
    =
    z \frac{d}{dz} \log \chi^{(n)}_z
    =
    (n-1) - \frac{1}{z} + \frac{1}{z\chi^{(n)}_z}.
\end{align}
Higher moments can also be computed exactly.  For example, with $\ell = \Ex_z^{(n)} (L)$
and $\chi = \chi_z^{(n)}$,
\begin{align}
    \Ex_z^{(n)} (L^2) & =
    \frac{1}{\chi} \left( z \frac{d}{dz}\right)  \left(z \frac{d}{dz}\chi \right)
    =
    \frac{1}{\chi} \left( z \frac{d}{dz}\right)
    \left(\chi \ell\right)
    = \ell^2 + z \frac{d}{dz}\ell
    \nonumber \\ & =
    \ell^2
    + \frac{1}{z} \left( 1 - \frac{\ell}{\chi} - \frac{1}{\chi} \right),
\end{align}
and hence the variance of $L$ is
\begin{equation}
\label{e:VarL}
    {\rm Var}_z^{(n)} (L)
    =
    \frac{1}{z}
    \left(
    1 - \frac{1}{\chi_z^{(n)}}(\Ex_z^{(n)} (L)+1)
     \right).
\end{equation}
Let $z_t = ze^t$.
The moment generating function of $L$ is, by definition and by \eqref{e:chigam},
\begin{equation}
\label{e:mgf}
    M_L(t)  = \Ex_z^{(n)} e^{tL} = \frac{1}{\chi_z^{(n)}}\chi_{z_t}^{(n)}
    =
    \frac{z_t^{n-1} e^{1/z_t} \Gamma(n,1/z_t)}{z^{n-1} e^{1/z} \Gamma(n,1/z)}
    \qquad  (t\in\R).
\end{equation}

\subsection{Results}
\label{sec:results}

The asymptotic behaviour of the susceptibility is given by the following theorem.
The gamma function is $\Gamma(s)=\Gamma(s,0)$.
The complementary error function ${\rm erfc}$ in \eqref{e:chicrit} is
\begin{equation}
    {\rm erfc}(x) = 1 - {\rm erf}(x) = \frac{2}{\sqrt{\pi}} \int_x^\infty e^{-t^2}dt
    \qquad  (x \in \R),
\end{equation}
and it extends to an entire analytic function on the complex plane.
We use the notation $f_n \sim g_n$ to mean that $\lim f_n/g_n =1$.

\begin{theorem}
\label{thm:chi}
Let $r> \frac 12$ and $y_n=1/z_n$.  If eventually $y_n \ge n+ n^r$ (subcritical case)
then
\begin{align}
\label{e:chisub1}
     \chi_{z_n}^{(n)} &= \frac{y_n}{y_n-n}
     \left(1-\frac{y_n}{(y_n-n)^2}
     + O\Big( \frac{y_n^2}{(y_n-n)^4} \Big) \right)
     .
\end{align}
Let $\tau_n,\tau\in\R$ with $\tau_n\to\tau$, and let $y_n=n+\tau_n n^{1/2}$ (critical case). Then
\begin{align}
\label{e:chicrit}
    \chi_{z_n}^{(n)} &\sim
     \sqrt{2\pi n}
     \, e^{\tau^2/2} \frac 12  {\rm erfc}(2^{-1/2}\tau)
     .
\end{align}
If eventually $y_n \le n  -n^r$ (supercritical case)
then
\begin{align}
\label{e:chisup1}
    \chi_{z_n}^{(n)} &\sim
     y_n^{1-n}e^{y_n}\Gamma(n).
\end{align}
\end{theorem}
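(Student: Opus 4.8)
The plan is to treat all three regimes through the single exact identity $\chi_{z_n}^{(n)} = y_n^{1-n} e^{y_n}\Gamma(n,y_n)$, which is \eqref{e:chigam} with $z_n = 1/y_n$, so that everything reduces to the asymptotics of $\Gamma(n,y_n)$ as $n\to\infty$. I would organise these asymptotics probabilistically: since $\Gamma(n,y)/\Gamma(n) = \mathbb{P}(N_y \le n-1)$ for a Poisson variable $N_y$ of mean $y$ (and variance $y$), the three cases correspond to the threshold $n-1$ lying well above, within $O(\sqrt n)$ of, or well below the mean $y_n$. Writing $Q_n = \Gamma(n,y_n)/\Gamma(n)$, I then need Stirling's formula for the common prefactor $y_n^{1-n}e^{y_n}\Gamma(n)$ together with the limit of $Q_n$ in each regime.

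I would do the supercritical case first, as it is easiest. Here $y_n \le n - n^r$, so the threshold exceeds the mean by at least $n^r-1\gg\sqrt{y_n}$, and $Q_n\to1$ follows from bounding $\mathbb{P}(N_{y_n}\ge n)$. Chebyshev already suffices: $\mathbb{P}(N_{y_n}\ge n)\le y_n/(n-y_n)^2\le n/n^{2r}=n^{1-2r}\to0$, the decay using exactly $r>\tfrac12$. Hence $\Gamma(n,y_n)\sim\Gamma(n)$ and $\chi_{z_n}^{(n)}\sim y_n^{1-n}e^{y_n}\Gamma(n)$, which is \eqref{e:chisup1}.

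For the critical case $y_n=n+\tau_n n^{1/2}$ I would factor $\chi_{z_n}^{(n)}=\big[y_n^{1-n}e^{y_n}\Gamma(n)\big]Q_n$ and treat the two factors separately. For the prefactor I insert $\Gamma(n)\sim\sqrt{2\pi}\,n^{n-1/2}e^{-n}$ and expand $y_n^{1-n}=y_n\,n^{-n}(1+\tau_n n^{-1/2})^{-n}$ via $\log(1+\tau_n n^{-1/2})=\tau_n n^{-1/2}-\tfrac12\tau_n^2 n^{-1}+O(n^{-3/2})$; the factors $e^{\pm n}$ and $e^{\pm\tau_n n^{1/2}}$ cancel, leaving the prefactor equal to $\sqrt{2\pi n}\,e^{\tau_n^2/2}(1+o(1))\sim\sqrt{2\pi n}\,e^{\tau^2/2}$. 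For $Q_n=\mathbb{P}(N_{y_n}\le n-1)$ I would standardise: $(N_{y_n}-y_n)/\sqrt{y_n}$ is asymptotically standard normal while the standardised threshold $(n-1-y_n)/\sqrt{y_n}\to-\tau$, so by the (uniform, via P\'olya's theorem) convergence of the Poisson distribution function to the normal one, $Q_n\to\Phi(-\tau)=\tfrac12{\rm erfc}(\tau/\sqrt2)$; alternatively one may simply quote the transition-region uniform asymptotics of $\Gamma(n,y)$ from \cite{NO19}. Combining the two factors gives \eqref{e:chicrit}. The only delicate points here are the bookkeeping in the Stirling cancellation and the justification of distributional convergence at a moving continuity point.

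The subcritical case is the main obstacle, because the stated two-term expansion must hold uniformly as $y_n-n$ ranges from just above $n^r$ (where $a:=n/y_n\to1$) all the way to $y_n\gg n$ (where $a\to0$). My approach would be to expand the tail series directly: with $p_k=\prod_{i=1}^k(1-i/n)$ one has $\chi_{z_n}^{(n)}=\sum_{k=0}^{n-1}a^k p_k$, and $\log p_k=-\tfrac{k(k+1)}{2n}+O(k^3/n^2)$, so $p_k=1-\tfrac{k(k+1)}{2n}+(\text{higher order})$. Summing the leading geometric series gives $\sum_k a^k=(1-a)^{-1}=y_n/(y_n-n)$, while the $n^{-1}$ term contributes $-\tfrac{1}{2n}\sum_k k(k+1)a^k=-\tfrac{a}{n(1-a)^3}=-y_n^2/(y_n-n)^3$; factoring out $y_n/(y_n-n)$ produces the claimed $1-y_n/(y_n-n)^2$. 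The crux is a uniform error bound, which I would obtain by splitting the sum at a cutoff $K_0$ chosen with $1/(1-a)\ll K_0\ll n^{1/2}$ (such a $K_0$ exists precisely because $r>\tfrac12$ forces $1/(1-a)=y_n/(y_n-n)=O(n^{1-r})\ll n^{1/2}$ in the hard regime $y_n\approx n$). On $k\le K_0$ I apply Taylor's theorem with remainder to $\log p_k$, which is legitimate since $k^2/n$ is small for the $O(y_n/(y_n-n))$ terms that actually matter; the tail is controlled by $\sum_{k>K_0}a^k p_k\le a^{K_0}/(1-a)$, which is negligible because the effective summation length $1/(1-a)$ is $o(K_0)$. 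Tracking the remaining contributions yields an error $O\big(a^2 n^{-2}(1-a)^{-5}\big)=O\big((y_n/(y_n-n))\,y_n^2/(y_n-n)^4\big)$, matching \eqref{e:chisub1}; indeed $r>\tfrac12$ is exactly what makes the first correction $y_n/(y_n-n)^2=O(n^{1-2r})$ tend to zero, so that the expansion is genuine. Making all of these estimates simultaneously uniform across the three sub-regimes is the hard part.
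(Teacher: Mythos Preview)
Your proposal is correct, and your overall architecture---reduce to \eqref{e:chigam} and then analyse $\Gamma(n,y_n)$ in each regime---coincides with the paper's.  The difference lies in how the asymptotics of $\Gamma(n,y_n)$ are obtained.

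The paper does not prove any of the three asymptotic statements for $\Gamma(n,y_n)$; it packages them as a lemma and cites \cite{NO19} for all three (the subcritical case from \cite[(2.2)]{NO19}, the critical case from \cite[Theorem~1.1]{NO19}, and the supercritical case from \cite[(2.1)]{NO19}, with the uniformity down to $|y_n-n|\ge n^r$ taken from \cite[Appendix~A]{NO19}).  The proof of the theorem is then two lines: insert the lemma into \eqref{e:chigam}, and for the critical case note that $(1+\tau_n n^{-1/2})^{-n}e^{\tau_n n^{1/2}}\to e^{\tau^2/2}$ when combining with Stirling.

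Your supercritical and critical arguments are genuinely more self-contained: the Poisson identification $\Gamma(n,y)/\Gamma(n)=\mathbb P(N_y\le n-1)$ turns the supercritical case into a one-line Chebyshev bound (which uses $r>\tfrac12$ in exactly the same way as the paper's verification of \cite[(2.1)]{NO19}), and turns the critical case into the Poisson CLT plus P\'olya's theorem to handle the moving threshold.  These are elementary replacements for the citations and cost essentially nothing extra.

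For the subcritical case you are effectively re-deriving the uniform expansion that the paper imports from \cite{NO19}.  Your outline is sound: the second-order coefficient in $p_k$ is a degree-four polynomial in $k$ vanishing at $k=0,1$, so $\sum_k a^k\cdot(\text{that polynomial})=O(a^2/(1-a)^5)$ uniformly in $a\in(0,1)$, which after dividing by $n^2$ gives exactly $O(y_n^3/(y_n-n)^5)$; the cutoff at $K_0$ with $1/(1-a)\ll K_0\ll n^{1/2}$ makes the Taylor remainder legitimate and the tail $a^{K_0}/(1-a)$ superpolynomially small.  This works, but it is considerably more labour than the paper's one-line citation, and you correctly flag the uniformity bookkeeping as the hard part.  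If you want a short proof, quoting \cite{NO19} here (as you mention in passing for the critical case) is the efficient option; if you want a self-contained one, your sketch is the right plan.
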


Special cases of the subcritical case
are\footnote{In fact, \eqref{e:chisub} is an immediate consequence of the dominated convergence
theorem, because $\lim_{n\to\infty}\sum_{N=0}^{n-1}n^{-N}c_N^{(n)}s^{-N} = (1-s^{-1})^{-1}$
since $n^{-N}c_N^{(n)} \le 1$ and $\lim_{n\to\infty}n^{-N}c_N^{(n)} = 1$ for each $N$.}
\begin{alignat}{2}
\label{e:chisub}
     \chi_{z}^{(n)} &
     \to \frac{s}{s-1}
     \qquad
     & (z= \tfrac{1}{sn},\, s>1),
    \\
\label{e:chinearsub}
    \chi_{z}^{(n)} &\sim
    \frac{1}{a} n^{1-q}
    \qquad & (z= \tfrac{1}{n+an^q}, \, q \in (\tfrac 12, 1)).
\end{alignat}
By Stirling's formula, special cases of the supercritical case are
\begin{alignat}{2}
\label{e:chinearsup}
    \log \chi_{z}^{(n)} & =
     \frac 12 a^2 n^{2q-1} + \log \sqrt{2\pi n} + O(n^{3q-2}) +o(1)
     \qquad
    & (z= \tfrac{1}{n-an^q}, \, q \in (\tfrac 12, 1)),
    \\
\label{e:chisup}
    \chi_{z}^{(n)} &\sim  s\sqrt{2\pi n} \, e^{(|\log s| +s-1)n}
    \qquad & (z=\tfrac{1}{sn},\, s\in (0,1)).
\end{alignat}
The exponent in \eqref{e:chisup} is positive for $s<1$, so there
is exponential growth.
For $q \in (\frac 12 , \frac 23)$, \eqref{e:chinearsup} yields the asymptotic
relation  $\chi_{z}^{(n)} \sim  \sqrt{2\pi n} \, e^{\frac 12 a^2 n^{2q-1}}$,
whereas for $q\in [\frac 23, 1)$ the error term $O(n^{3q-2})$ for the logarithm is
not insignificant.
It then follows from \eqref{e:VarL},
and since $L<n$ by definition, that ${\rm Var}_z^{(n)}(L)
\sim z^{-1}$ in the supercritical case.

The crossover between the critical and subcritical regimes, and between the critical
and supercritical regimes, is given by an extension of \eqref{e:chicrit}, namely that
as long as $|\tau_n| \le O(n^{q-1/2})$ with $q \in [\frac 12,\frac 23)$,
\begin{align}
\label{e:chicritextension}
    \chi_{z_n}^{(n)} &\sim
     \sqrt{2\pi n}
     \, e^{\tau_n^2/2} \frac 12  {\rm erfc}(2^{-1/2}\tau_n)
     .
\end{align}
This reduces to $\chi_{z}^{(n)} \sim  \sqrt{2\pi n} \, e^{\frac 12 a^2 n^{2q-1}}$
when $\tau_n=-an^{q-1/2}$ (since then
${\rm erfc}(2^{-1/2}\tau_n) \to 2$), and to
\eqref{e:chinearsub} when $\tau_n=an^{q-1/2}$ (since then
${\rm erfc}(2^{-1/2}\tau_n) \sim \sqrt{2/\pi}\tau_n^{-1} e^{-\tau_n^2/2}$).
The proof of \eqref{e:chicritextension} is included in the proof of Theorem~\ref{thm:chi}.

From Theorem~\ref{thm:chi}, it is straightforward to deduce the following
asymptotic behaviour for the expected length, by inserting the formulas for
the susceptibility into \eqref{e:L1}.  The second-order term
in \eqref{e:chisub1} is needed for
the subcritical case of \eqref{e:EL}.
The constant in the critical case of \eqref{e:EL} is defined by
\begin{equation}
    \alpha_\tau = -\tau +\sqrt{2/\pi}e^{-\tau^2/2}[{\rm erfc}(2^{-1/2}\tau)]^{-1},
\end{equation}
which is the strictly positive decreasing function of $\tau\in\R$
plotted in Figure~\ref{fig:alpha}.  It obeys $\alpha_\tau \sim -\tau$ as $\tau \to -\infty$
and $\alpha_\tau \sim \tau^{-1}$ as $\tau \to +\infty$.

\begin{cor}
\label{cor:EL}
Let $r> \frac 12$ and $\tau\in \mathbb{R}$.
With the subcritical, critical, and supercritical cases as specified in
Theorem~\ref{thm:chi},
the expected length has the asymptotic behaviour, as $n \to \infty$,
\begin{align}
\label{e:EL}
     \Ex_{z_n}^{(n)} (L) &\sim
     \begin{cases}
    \frac{n}{\frac{1}{z_n} -n} & (\text{subcritical})
    \\
    \alpha_\tau n^{1/2} \qquad & (\text{critical})
    \\
    n - \frac{1}{z_n} & (\text{supercritical})
    .
    \end{cases}
\end{align}
\end{cor}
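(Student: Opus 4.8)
The plan is to feed the three susceptibility asymptotics of Theorem~\ref{thm:chi} into the exact identity \eqref{e:L1}, which with $y_n=1/z_n$ reads $\Ex_{z_n}^{(n)}(L)=(n-1)-y_n+y_n/\chi_{z_n}^{(n)}$. In each regime the only real work is to control the term $y_n/\chi_{z_n}^{(n)}$ and to combine it with the explicit affine expression $(n-1)-y_n$; the three cases of \eqref{e:EL} then emerge after simplification.

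The supercritical case is the easiest. From \eqref{e:chisup1} and Stirling's formula (equivalently, from \eqref{e:chinearsup}--\eqref{e:chisup}) the exponent governing $\chi_{z_n}^{(n)}$ grows faster than any power of $n$, so $y_n/\chi_{z_n}^{(n)}\le n/\chi_{z_n}^{(n)}\to0$. Thus \eqref{e:L1} collapses to $(n-1)-y_n+o(1)$, and since $n-y_n\ge n^r\to\infty$ swamps both the constant $-1$ and the $o(1)$, the result is asymptotic to $n-y_n=n-1/z_n$.

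The critical case hinges on an algebraic identity rather than an estimate. Here \eqref{e:chicrit} gives $\chi_{z_n}^{(n)}\sim C\sqrt n$ with $C=\sqrt{2\pi}\,e^{\tau^2/2}\tfrac12\,{\rm erfc}(2^{-1/2}\tau)$, so, using $y_n\sim n$, one has $y_n/\chi_{z_n}^{(n)}\sim\sqrt n/C$. Since $(n-1)-y_n=-1-\tau_n n^{1/2}\sim-\tau n^{1/2}$, the two order-$\sqrt n$ contributions add to give $\Ex_{z_n}^{(n)}(L)\sim(-\tau+1/C)n^{1/2}$. I would then verify the algebraic identity $1/C=\sqrt{2/\pi}\,e^{-\tau^2/2}[{\rm erfc}(2^{-1/2}\tau)]^{-1}$, so that $-\tau+1/C$ is precisely the constant $\alpha_\tau$.

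The subcritical case is the delicate one, and it is where I expect the only genuine obstacle: two successive cancellations force the use of the second-order term in \eqref{e:chisub1}. Writing $\chi_{z_n}^{(n)}=\frac{y_n}{y_n-n}\bigl(1-\frac{y_n}{(y_n-n)^2}+O(\frac{y_n^2}{(y_n-n)^4})\bigr)$ and inverting the bracket as a geometric series yields $y_n/\chi_{z_n}^{(n)}=(y_n-n)+\frac{y_n}{y_n-n}+O(\frac{y_n^2}{(y_n-n)^3})$. Inserting this into \eqref{e:L1}, the dominant terms cancel via $(n-1)-y_n+(y_n-n)=-1$, and then $-1+\frac{y_n}{y_n-n}=\frac{n}{y_n-n}$, leaving $\Ex_{z_n}^{(n)}(L)=\frac{n}{y_n-n}+O(\frac{y_n^2}{(y_n-n)^3})$. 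The final step is to confirm the error is negligible, i.e.\ $\frac{y_n^2}{(y_n-n)^3}=o(\frac{n}{y_n-n})$; this is equivalent to $\frac{1}{n}(\frac{y_n}{y_n-n})^2\to0$, which follows from $y_n-n\ge n^r$ and $r>\tfrac12$ since then $\frac{y_n}{y_n-n}\le1+n^{1-r}$. It is exactly the $-1$ produced by the first cancellation that must be absorbed by the second-order term, so discarding the $O$-term of \eqref{e:chisub1} would lose the leading behaviour entirely.
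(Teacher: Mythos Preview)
Your argument is correct and follows essentially the same route as the paper: all three cases are obtained by inserting the susceptibility asymptotics of Theorem~\ref{thm:chi} into the exact identity \eqref{e:L1}, with the subcritical case requiring the second-order term of \eqref{e:chisub1} to survive the double cancellation exactly as you identify. The paper's supercritical step is marginally simpler---it deduces $z\chi\to\infty$ from monotonicity of $\chi$ in $z$ together with \eqref{e:chinearsup} at the boundary $q=r$---but your direct use of \eqref{e:chisup1} and Stirling reaches the same conclusion.
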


\begin{figure}
\centering{
\includegraphics[scale=1.0]{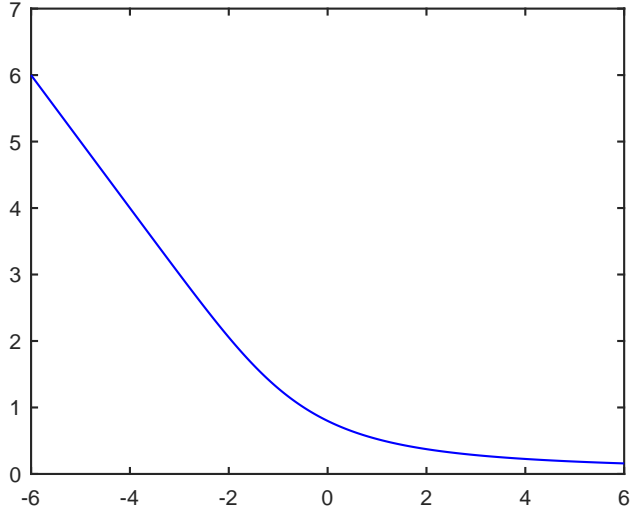}
\caption{Graph of $\alpha_\tau$ vs $\tau$.}
\label{fig:alpha}
}
\end{figure}

\begin{proof}
Let $y=1/z$ and $\delta =y-n$.
For the subcritical case, by \eqref{e:L1} and Theorem~\ref{thm:chi},
\begin{align}
    \Ex_z^{(n)} (L) &
    =
    n-1-y + y \chi^{-1}
    = -1-\delta + \delta \left(1 + \frac{y}{\delta^2}
    + O\Big( \frac{y^2}{\delta^4} \Big) \right)
    \nonumber \\ & =
    \frac{n}{\delta}\left(1  + O\Big( \frac{y^2}{n\delta^2} \Big) \right)
    .
\end{align}
Since $y^2=n^2+2\delta n + \delta^2$,
the error term on the right-hand side is bounded above by a multiple of
$n\delta^{-2} + \delta^{-1} +n^{-1}$ and hence goes to zero as $n\to\infty$ since
eventually $\delta^2 \ge n^{2r}$.  This completes the proof of the subcritical case.
The supercritical case follows from \eqref{e:L1} and the fact that $z\chi \to \infty$
by \eqref{e:chinearsup} with $q=r$ (and monotonicity of $\chi$ in $z$).
The critical case follows from \eqref{e:L1} and \eqref{e:chicrit}.
\end{proof}

Special cases of the subcritical and supercritical behaviour are:
\begin{align}
\label{e:ELsub}
     \Ex_z^{(n)} (L) &\sim
     \begin{cases}
    \frac{1}{s-1} & (z=\tfrac{1}{sn},\, s>1)
    \\
    \frac{1}{a} n^{1-q} & (z= \tfrac{1}{n+an^q}),
    \end{cases}
\end{align}
\begin{align}
\label{e:ELsup}
     \Ex_z^{(n)} (L) &\sim
     \begin{cases}
    an^q & (z= \tfrac{1}{n-an^q})
    \\
     n(1-s) & (z=\tfrac{1}{sn},\, s\in (0,1))
    .
    \end{cases}
\end{align}
The crossover from critical to subcritical or supercritical can be seen from
\eqref{e:L1} and \eqref{e:chicritextension}.  Indeed, if
$\frac 1z=n+\tau_n n^{1/2}$ with
$|\tau_n| \le O(n^{q-1/2})$ for $q \in [\frac 12, \frac 23)$, then we obtain
\begin{equation}
    E_z^{(n)}(L) = n-1- (n+\tau_n n^{1/2})
    + (n+\tau_n n^{1/2})(\chi_{z}^{(n)})^{-1} \sim \alpha_{\tau_n}\sqrt{n}.
\end{equation}
Since $\alpha_\tau \sim -\tau$ as $\tau \to -\infty$, this is consistent with the
first case of \eqref{e:ELsup}, and since
$\alpha_\tau \sim \tau^{-1}$ as $\tau \to +\infty$, it is consistent with the
second case of \eqref{e:ELsub}.

The first case of \eqref{e:ELsub} and the last case of \eqref{e:ELsup} were proved in \cite[Theorem~1.5]{Yadi16},
as was the case $\tau=0$ of the critical case of \eqref{e:EL}
($\alpha_0 = \sqrt{2/\pi}$ agrees with the constant $\alpha$ in \cite{Yadi16}),
by using a formula
for the expected length in terms of a Poisson random variable rather than employing
\eqref{e:chigam} as we do.  A proof that $E_z^{(n)}(L) \asymp n^{1-q}$
in the second case of \eqref{e:ELsub} was announced in
\cite{ZGFDG18} (the scaling of $z$ on \cite[p.185701-3]{ZGFDG18} differs from ours
by a factor $n$).

The next theorem gives the asymptotic distribution of $L$ rescaled as suggested
by Corollary~\ref{cor:EL}.
In its statement,
$G_p$ is a geometric
random variable with parameter $p$,
$W_a$ is an exponential random variable with mean $a^{-1}$, and
$X_\tau$ is the random variable (shown to exist in the proof) with moment generating function
\begin{equation}
\label{e:mgfX}
    M_{X_\tau}(t)=
    e^{ -\tau t +t^2/2}\frac{{\rm erfc}(2^{-1/2}(\tau-t))}{{\rm erfc}(2^{-1/2}\tau)}.
\end{equation}
Differentiation confirms that the constant $\alpha_\tau$ in \eqref{e:EL} is equal
to $\alpha_\tau = EX_\tau = M_{X_\tau}'(0)$.

\begin{theorem}
\label{thm:L}
Let $\tau \in \R$, $a>0$, and $q \in (\frac 12, 1)$.
As $n \to \infty$, the rescaled length converges in distribution
as follows:
\begin{alignat}{2}
\label{e:Lsubconv}
     L &\Rightarrow
    G_{1-1/s} -1 \qquad & (z=\tfrac{1}{sn},\, s>1),
    \\
\label{e:Lnearsubconv}
     L/n^{1-q} &\Rightarrow W_a & (z = \tfrac{1}{n+an^q}),
    \\
\label{e:Lcritconv}
     L/n^{1/2} &\Rightarrow X_\tau & (z = \tfrac{1}{n+\tau \sqrt{n}}),
    \\
\label{e:Lnearsupconv}
    L/n^q &\Rightarrow a & (z= \tfrac{1}{n-an^q}),
    \\
\label{e:Lsupconv}
    L/n &\Rightarrow 1-s& (z=\tfrac{1}{sn},\, s\in (0,1)).
\end{alignat}
\end{theorem}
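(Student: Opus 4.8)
The plan is to reduce all five statements to convergence of moment generating functions, using the exact identity \eqref{e:mgf} together with the continuity theorem for moment generating functions. Since each candidate limit law (a geometric, an exponential, the law $X_\tau$, and two point masses) has a moment generating function that is finite on an open interval containing the origin, it is enough to prove that the appropriately rescaled moment generating function of $L$ converges to the claimed limit for all $t$ in such an interval; weak convergence then follows. Concretely, to study $L/b_n$ (with $b_n=1,n^{1-q},n^{1/2},n^q,n$ in the five cases) I would compute
\begin{equation*}
\Ex_z^{(n)} e^{tL/b_n} = M_L(t/b_n) = \frac{\chi^{(n)}_{z e^{t/b_n}}}{\chi^{(n)}_z},
\end{equation*}
and read off the asymptotics of numerator and denominator from Theorem~\ref{thm:chi}. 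The only device needed is that replacing $z$ by $z e^{t/b_n}$ replaces $1/z$ by $(1/z)e^{-t/b_n}$; expanding $e^{-t/b_n}$ to the relevant order identifies the regime and the parameter governing the numerator.

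In the subcritical case $z=1/(sn)$, $s>1$, I would write $z_t=1/(s_t n)$ with $s_t=se^{-t}$; for $t<\log s$ one has $s_t>1$, so \eqref{e:chisub} applies to numerator and denominator and yields $M_L(t)\to (s-1)/(s-e^t)$, which is the moment generating function of $G_{1-1/s}-1$. In the near-subcritical case $z=1/(n+an^q)$, the substitution $t\mapsto t/n^{1-q}$ gives $1/z_{t/n^{1-q}}=n+(a-t)n^q(1+o(1))$, so for $t<a$ the leading term of \eqref{e:chisub1} (equivalently \eqref{e:chinearsub}) applied to numerator and denominator gives $M_L(t/n^{1-q})\to a/(a-t)$, the moment generating function of $W_a$; the sub-leading corrections in $1/z_{t/n^{1-q}}$ are irrelevant because the leading asymptotic $\sim (a')^{-1}n^{1-q}$ depends only on the leading coefficient $a'=a-t$. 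For the two concentration statements I would instead track the exponent of the ratio of supercritical asymptotics, using \eqref{e:chisup1} for $z=1/(n-an^q)$ and \eqref{e:chisup} for $z=1/(sn)$ with $s\in(0,1)$. After the substitutions $t\mapsto t/n^q$ and $t\mapsto t/n$, a short expansion shows the exponents converge to $at$ and $(1-s)t$ respectively, so $M_L(t/n^q)\to e^{at}$ and $M_L(t/n)\to e^{(1-s)t}$, giving convergence to the constants $a$ and $1-s$.

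The critical case is the heart of the matter. With $1/z=n+\tau\sqrt n$ and $t\mapsto t/\sqrt n$ I would compute
\begin{equation*}
\frac{1}{z_{t/\sqrt n}}=\Big(n+\tau\sqrt n\Big)e^{-t/\sqrt n}=n+(\tau-t)\sqrt n+O(1),
\end{equation*}
so the numerator is governed by \eqref{e:chicrit} with effective parameter $\tau-t$ and the denominator by \eqref{e:chicrit} with parameter $\tau$. In the ratio the prefactors $\sqrt{2\pi n}$ cancel and the Gaussian exponents combine via $(\tau-t)^2/2-\tau^2/2=-\tau t+t^2/2$, producing exactly the function \eqref{e:mgfX}; hence $L/\sqrt n\Rightarrow X_\tau$. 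To close this case I must still exhibit $X_\tau$ as a genuine random variable. Using $\tfrac12{\rm erfc}(2^{-1/2}x)=P(Z\ge x)$ for a standard normal $Z$, I would recognise the right-hand side of \eqref{e:mgfX} as $e^{-\tau t}$ times $\Ex[e^{tV}]$, where $V$ is $Z$ conditioned on $Z\ge\tau$; completing the square then identifies $X_\tau$ with the Gaussian overshoot $(Z-\tau\mid Z\ge\tau)$, a bona fide law on $[0,\infty)$ whose moment generating function is \eqref{e:mgfX}.

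The hard part will be bookkeeping rather than anything conceptual. In each case I must verify that, after the $t$-dependent rescaling $z\mapsto ze^{t/b_n}$, the shifted argument $1/z_{t/b_n}$ still lies in the regime of Theorem~\ref{thm:chi} used for the denominator, with the sub-leading corrections provably negligible at the relevant order, and I must restrict attention to the $t$-interval on which the limiting moment generating function is finite (namely $t<\log s$ in the geometric case, $t<a$ in the exponential case, and all real $t$ otherwise). Since each of these intervals contains a neighbourhood of the origin and each limit law is determined by its moment generating function there, the continuity theorem delivers all five weak limits.
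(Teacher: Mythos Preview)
Your proposal is correct and follows the same overall strategy as the paper: compute $M_{L/b_n}(t)=\chi^{(n)}_{ze^{t/b_n}}/\chi^{(n)}_z$ via \eqref{e:mgf} and read off both numerator and denominator from the appropriate case of Theorem~\ref{thm:chi}. The subcritical and near-subcritical cases are handled identically to the paper.

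There are two minor differences worth noting. In the critical case, the paper also arrives at \eqref{e:mgfX} for real $t$, but then switches to $t=i\theta$ and invokes L\'evy's continuity theorem to conclude, precisely because the existence of a random variable with moment generating function \eqref{e:mgfX} is not immediate (the paper flags this in a footnote). You instead stay with real $t$ and supply that existence by identifying $X_\tau$ explicitly as the Gaussian overshoot $(Z-\tau\mid Z\ge\tau)$; this is a clean alternative and is in fact the identification mentioned in the paper's ``Note added'' as due to \cite{DGGNZ19}. In the supercritical regime, the paper proves the single unified statement $L/\ell\Rightarrow 1$ with $\ell=\Ex_z^{(n)}(L)$ (from which \eqref{e:Lnearsupconv}--\eqref{e:Lsupconv} follow via Corollary~\ref{cor:EL}), whereas you treat the two parametrisations separately; both routes use \eqref{e:chisup1} in the same way, and the paper's version is only marginally more economical.
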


The limits \eqref{e:Lnearsupconv}--\eqref{e:Lsupconv} are special cases of the
more general statement that in the supercritical case, with $\ell = \Ex_z^{(n)}(L)$,
\begin{alignat}{2}
\label{e:Lsupconvcombined}
    L/\ell &\Rightarrow 1  \qquad& (\text{supercritical}).
\end{alignat}
We will prove the more general statement in the proof of Theorem~\ref{thm:L}.

The proofs of Theorems~\ref{thm:chi} and \ref{thm:L} are given in Section~\ref{sec:pf}.
We have stated the results above in a simple manner in order to highlight
the leading behaviour.  Full asymptotic expansions could be obtained by using
the expansions of \cite{NO19} for the incomplete gamma function, but our focus is
on the leading behaviour and we do not
pursue higher precision here.

\smallskip \noindent \emph{Note added:} After the first version of this paper was
posted on arXiv, the author was kindly informed by T.M.~Garoni that the forthcoming paper
\cite{DGGNZ19} contains independently obtained proofs of many of our results, including
Theorem~\ref{thm:L}.  In \cite{DGGNZ19}, the distribution of $X_\tau$ is identified as
that of $-\tau$ plus a standard normal random variable conditioned to exceed $\tau$.

\subsection{Critical behaviour}

\paragraph{Summary.}
The above results can be summarised by saying that there is a \emph{critical scaling window}
of $z$ values of the form $n^{-1}(1+O(n^{-1/2}))$ where the susceptibility is of order
$n^{1/2}$, the expected length is of order $n^{1/2}$, and the limiting length
has an interesting distribution.
Values $z=1/(sn)$ are \emph{subcritical}
when $s>1$ ($L$ remains bounded) and \emph{supercritical}
when $s<1$ ($L$ is of order $n$ and the self-avoiding walk has positive density).
Progression into the critical window from the subcritical side
occurs for $z = n^{-1}(1-n^{-p})$
with $p \in (0,\frac 12)$.  Similarly, progression out of the critical window
to the supercritical side
occurs for $z = n^{-1}(1+n^{-p})$
with $p \in (0,\frac 12)$.  Both progressions involve power-law  behaviour
for $L$.

\paragraph{Percolation on finite graphs.}
It is instructive to compare our results with the situation for percolation
on high-dimensional finite transitive graphs such as the complete graph,
the hypercube, or discrete tori.
Percolation on the complete graph---the Erd\H{o}s--R\'{e}nyi random graph---is of
course a fundamental example in probability theory and combinatorics
with an extensive literature, e.g., \cite{JLR00}.
In \cite{BCHSS05a}, it was shown that
the correct definition in the high-dimensional setting is to define the critical probability
$p_c$ as the solution to the equation $\chi(p_c)=\lambda V^{1/3}$, where
$\chi(p)$ is the expected cluster size of a fixed vertex when the bond
occupation probability is $p$,
$V$ is the number of vertices in the graph, and $\lambda>0$ is an arbitrary
fixed constant.\footnote{The utility and freedom
to vary $\lambda$ is discussed in \cite{BCHSS05a}.  There is no
notion of ``the'' critical value on a finite graph, and any value in the critical
window will do.}
For the hypercube and high-dimensional discrete tori,
it has been proved\footnote{In fact, some open questions remain concerning
the supercritical phase on tori; see \cite{HH17book}.} \cite{BCHSS05a,HH17book} that there
is a phase transition with critical scaling window consisting of $p$
values with $|p-p_c|$ of order $\Omega^{-1}V^{-1/3}$, where $\Omega$
is the degree ($\Omega=n$ and $V=2^n$ for the hypercube, $\Omega=2d$
and $V=r^d$ for a $d$-dimensional
torus of period $r$).  For the hypercube, $p_c$ has an asymptotic expansion to all
orders, $p_c \sim \sum_{j=0}^\infty a_j n^{-j}$, with rational coefficients $a_j$ that
are independent of $\lambda$ (in particular, $a_1=a_2=1$, $a_3=\frac 72$) \cite{HS05}.
We emphasise that the power $V^{1/3}$ is the correct power in the high-dimensional
setting, but will not in general be correct for low-dimensional graphs:
in \cite[Section~3.4.2]{BCHSS05a}
it is proposed
that $V^{1/3}$ corresponds more generally to $V^{(\delta-1)/(\delta+1)}$ where
$\delta$ is the critical exponent for the magnetisation (with mean-field value $\delta=2$).

\paragraph{Conjectured picture for self-avoiding walk.}
For the complete graph, $n$ occurs both as the degree $\Omega=n-1$ and as the number
of vertices $V=n$, but the degree and volume will play distinct roles in other graphs.
By analogy with the above picture for percolation, our results suggest that the
natural definition\footnote{This definition differs from the proposal
in \cite[Definition~1.2]{Yadi16}, which calls $z_n$ \emph{subcritical} if
$\limsup \chi^{(n)}_{z_n} <\infty$, \emph{supercritical} if $\liminf \chi^{(n)}_{z_n} =\infty$,
and \emph{critical} if $z_n/s$ is subcritical when $s>1$ and supercritical when $s<1$.
By \eqref{e:chisub}, \eqref{e:chicrit}, and
\eqref{e:chisup}, this prescribes $z=\frac 1n$ as both critical
and supercritical for the complete graph, and thus is insufficiently
discerning.}
of the critical value for self-avoiding walk on a high-dimensional
transitive graph with $V$ vertices is the value $z_c$ for which $\chi(z_c)= \lambda V^{1/2}$
with any fixed $\lambda>0$.\footnote{For the complete graph, since the function
$\alpha_\tau$ is a bijection of $\R$ onto $(0,\infty)$, by the critical case
of \eqref{e:EL} a choice of $\lambda$
essentially specifies a choice of $\tau_\lambda$ such that the susceptibility
at $z=1/(n+\tau_\lambda n^{1/2})$ has the value $\lambda n^{1/2}$.}
Also, our results suggest that the critical scaling
window consists of $z$ values with $|z-z_c|$ of order $\Omega^{-1} V^{-1/2}$, where $\Omega$
is the degree.
For the hypercube, the definition gives $z_c$ according to $\chi_n(z_c)=\lambda 2^{n/2}$
and the scaling window is conjectured to occur for $|z-z_c|$ of order $n^{-1}2^{-n/2}$.
For the discrete torus of period $r$ in dimension $d>4$, instead $\chi(z_c)=\lambda r^{d/2}$ with
a scaling window with $|z-z_c|$ of order $(2d)^{-1}r^{-d/2}$.
It would be of interest to prove that this conjectured picture does hold and
to carry out an analysis similar to what has been done for percolation.

\paragraph{Literature.}
The idea that for $z$ above a critical value the self-avoiding walk enters
a dense phase with walk length of the order of the volume has been explored in
previous papers.  A sharp transition to a dense phase for self-avoiding walks
diagonally crossing
a square or higher-dimensional cube was proved in \cite{Madr95a} and
further studied in \cite{BGJ05}.  The existence of a dense phase was established
for self-avoiding walk in a two-dimensional domain in \cite{DKY14}.  The vanishing of the
density as the critical point is approached from the supercritical side
was proven in \cite{GI95} for weakly self-avoiding
walk on a 4-dimensional hierarchical lattice,
with a logarithmic correction to the linear $(1-s)$ mean-field behaviour
evident in the last case of \eqref{e:ELsup}.  For finite graphs of large girth,
a dense phase was proven to exist in \cite{Yadi16} (where, as mentioned above,
the complete graph was also considered).  Forthcoming work on the complete graph
announced in \cite{ZGFDG18} subsequently appeared in \cite{DGGNZ19} (see Note Added
at end of Section~\ref{sec:results}).

\section{Proof of results}
\label{sec:pf}

Asymptotic formulas for the susceptibility are immediate
consequences of the following
asymptotic formulas for the incomplete gamma function, taken
from \cite{NO19}.
Asymptotic expansions to all orders are given in \cite{NO19},
but we only need the statements in Lemma~\ref{lem:G}.
The variable $y_n$ represents $1/z_n$.

\begin{lemma}
\label{lem:G}
Let $r > \frac 12$.
Suppose that eventually $y_n\ge n+ n^r$ (subcritical case).  Then
\begin{equation}
    \Gamma(n,y_n) = y_n^n e^{-y_n} \frac{1}{y_n-n}\left( 1- \frac{y_n}{(y_n-n)^2}
    + O\Big( \frac{y_n^2}{(y_n-n)^4} \Big)\right).
\end{equation}
Let $\tau_n,\tau\in\C$ with $\tau_n\to\tau$,
and let $y_n=n+\tau_n n^{1/2}$.  Then
\begin{equation}
\label{e:Gammacrit}
    \Gamma(n,y_n) \sim \Gamma(n)
    \frac{1}{2} {\rm erfc}(2^{-1/2}\tau).
\end{equation}
Suppose that eventually $y_n  \le n-n^r$ (supercritical case).  Then
\begin{equation}
    \Gamma(n,y_n) \sim
    \Gamma(n)
    .
\end{equation}
\end{lemma}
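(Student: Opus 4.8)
All three regimes are statements about the right tail of a Gamma$(n,1)$ random variable. Writing $T_n=\sum_{i=1}^n E_i$ for a sum of $n$ independent standard exponential variables, the density of $T_n$ is $t^{n-1}e^{-t}/\Gamma(n)$, so by \eqref{e:igf}
\begin{equation}
    \frac{\Gamma(n,y_n)}{\Gamma(n)} = \mathbb{P}(T_n > y_n).
\end{equation}
Since $\Ex T_n = n$ and ${\rm Var}\, T_n = n$, the subcritical, critical, and supercritical cases are exactly the cases in which the threshold $y_n$ sits far above, within $O(\sqrt n)$ of, or far below the mean. My plan is to treat each by the probabilistic tool matched to its scale: the central limit theorem for the window, a Chebyshev bound below it, and a Laplace/Watson-type endpoint expansion above it.

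\textbf{Critical and supercritical cases.} For $y_n = n+\tau_n n^{1/2}$ the plan is to invoke the central limit theorem: $(T_n-n)/\sqrt n \Rightarrow Z$ with $Z$ standard normal, whence for real $\tau$,
\begin{equation}
    \mathbb{P}(T_n > y_n) = \mathbb{P}\!\left(\tfrac{T_n-n}{\sqrt n} > \tau_n\right) \to \mathbb{P}(Z>\tau) = \tfrac12 {\rm erfc}(2^{-1/2}\tau),
\end{equation}
which is \eqref{e:Gammacrit}. To cover complex $\tau$, where this probabilistic reading fails, I would instead Gaussian-approximate the integrand: Stirling gives $t^{n-1}e^{-t}/\Gamma(n) \sim (2\pi n)^{-1/2} e^{-(t-n)^2/(2n)}$ uniformly for $t-n = O(\sqrt n)$, so $t=n+w\sqrt n$ turns $\int_{y_n}^\infty$ into $\int_{\tau_n}^\infty (2\pi)^{-1/2}e^{-w^2/2}\,dw$ plus a negligible far tail, and analyticity in $\tau$ extends the identity off the real axis. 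For the supercritical case only a crude bound is needed: by Chebyshev,
\begin{equation}
    \mathbb{P}(T_n \le y_n) \le \mathbb{P}(|T_n-n| \ge n^r) \le \frac{{\rm Var}\,T_n}{n^{2r}} = n^{1-2r} \to 0 ,
\end{equation}
so $\mathbb{P}(T_n>y_n)\to1$ and $\Gamma(n,y_n)\sim\Gamma(n)$.

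\textbf{Subcritical case.} This is the one requiring genuine work, since an asymptotic \emph{equality} with an explicit correction term is demanded, not a mere $\sim$. The plan is to start from $\Gamma(n,y)=(n-1)!\,e^{-y}\sum_{k=0}^{n-1} y^k/k!$, factor out the largest ($k=n-1$) term, and write $\Gamma(n,y)=y^{n-1}e^{-y}S$ with
\begin{equation}
    S = \sum_{j=0}^{n-1} \Big(\frac ny\Big)^{\!j} \prod_{i=1}^{j}\Big(1-\frac in\Big).
\end{equation}
Discarding the weights and extending the sum to infinity gives the geometric series $\sum_{j\ge0}(n/y)^j = y/(y-n)$, the claimed leading term (note $y^{n-1}\cdot y/(y-n)=y^n/(y-n)$). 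I would then expand $\log\prod_{i=1}^j(1-i/n) = -\frac{j^2}{2n}-\frac{j}{2n}+\cdots$ and resum against $(n/y)^j$ using the closed forms for $\sum_j j\,x^j$ and $\sum_j j^2 x^j$ at $x=n/y$; these two terms together produce the correction $-\,y/(y-n)^2$ inside the bracket, with the remaining contributions collected into $O\big(y^2/(y-n)^4\big)$.

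\textbf{Main obstacle.} The crux is the uniform bookkeeping of errors across the full subcritical range $y_n-n\ge n^r$, from thresholds just beyond the window (where $y_n-n\sim n^{1/2+}$ and the correction is largest) out to $y_n\gg n$ (where it is negligible). The natural expansion parameter is $\varepsilon = y/(y-n)^2$: the first correction is $-\varepsilon$ and the stated remainder is precisely $O(\varepsilon^2)$. Since $y-n\ge n^r$ forces $\varepsilon\to0$ exactly when $r>\tfrac12$, the geometric weight $(n/y)^j$ concentrates $S$ on $j=O\big(y/(y-n)\big)$, and on this range the discarded pieces of $\log\prod(1-i/n)$ are of relative order $\varepsilon^2$ or smaller. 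Verifying that every discarded contribution genuinely fits within $O(\varepsilon^2)$ uniformly over this range, together with justifying the contour deformation for complex $\tau$ in the critical case, is where the real care lies.
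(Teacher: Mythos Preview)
Your approach is sound but genuinely different from the paper's. The paper does not prove Lemma~\ref{lem:G} from scratch: it simply cites the asymptotic expansions for the incomplete gamma function established in \cite{NO19}---equation~(2.2) there for the subcritical case, Theorem~1.1 (and Proposition~1.1) for the critical case, and equation~(2.1) for the supercritical case---together with the observation in \cite[Appendix~A]{NO19} that these expansions remain valid for thresholds at distance $n^r$ from $n$ when $r>\tfrac12$. The only computation the paper actually carries out is a short Stirling estimate showing $|\Gamma(n,y_n)/\Gamma(n)-1|\to0$ in the supercritical case.

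By contrast, you give a self-contained probabilistic derivation via the identity $\Gamma(n,y)/\Gamma(n)=\mathbb{P}(T_n>y)$. Your Chebyshev argument for the supercritical case is cleaner than the paper's Stirling route; your CLT argument for real $\tau$ is immediate; and your subcritical expansion is correct (the $-j/(2n)$ and $-j^{2}/(2n)$ contributions, summed against $(n/y)^j$, do combine to give exactly the $-y/(y-n)^2$ correction inside the bracket, as you claim). What the paper's citation-based approach buys is uniformity and precision for free: the expansions in \cite{NO19} are already proved to hold uniformly over the stated ranges, and they cover complex $\tau$ directly via the analysis in the transition region. In your approach the two places requiring real care are exactly the ones you flag---the uniform $O(\varepsilon^2)$ control in the subcritical expansion across the whole range $y-n\ge n^r$, and the extension of the critical case to complex $\tau$, where the probabilistic reading and the CLT are unavailable and a genuine contour or local-Gaussian argument is needed.
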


\begin{proof}
The subcritical case follows from \cite[(2.2)]{NO19},
together with the observation
in \cite[Appendix~A]{NO19} that \cite[(2.2)]{NO19} does hold for
$y_n \ge n+n^r$ when $r> \frac 12$.
The critical case follows from \cite[Theorem~1.1]{NO19} (see also
\cite[Proposition~1.1]{NO19}\footnote{This is Proposition~1.2 in the arXiv version \url{https://arxiv.org/pdf/1803.07841.pdf} of \cite{NO19}.}).
The supercritical case follows from \cite[(2.1)]{NO19},
again with the observation
in \cite[Appendix~A]{NO19} that \cite[(2.1)]{NO19} does hold under our hypothesis.
In more detail for the last case, it follows from \cite[(2.1)]{NO19}
and Stirling's formula that, with $\delta_n = n-y_n \ge n^r$,
\begin{align}
    \left|\frac{\Gamma(n,y_n)}{\Gamma(n)} - 1 \right|
    & \sim
    \frac{1}{n-y_n} y_n^ne^{-y_n} \frac{1}{(n-1)^{n-1}e^{-n+1}\sqrt{2\pi n}}
    \nonumber \\  & \sim
    \frac{1}{\sqrt{2\pi}} \frac{n^{1/2}}{\delta_n} (1-\delta_n/n)^n e^{\delta_n}
    \le
    \frac{1}{\sqrt{2\pi}\, n^{r-1/2}} ,
\end{align}
and the right-hand side goes to zero since $r> \frac 12$.
\end{proof}

\begin{proof}[Proof of Theorem~\ref{thm:chi}]
The asymptotic formulas \eqref{e:chisub1}--\eqref{e:chisup1} for the susceptibility
follow from
\eqref{e:chigam} and Lemma~\ref{lem:G}.
For \eqref{e:chicrit}, we also  use Stirling's formula and the elementary fact
that
$(1+\tau_n n^{-1/2})^{-n}e^{\tau_n n^{1/2}} \to e^{\frac 12 \tau^2 }$.
The extension \eqref{e:chicritextension} of the critical case is a consequence
of the fact that \eqref{e:Gammacrit} continues to hold in the extended setting,
by \cite[Proposition~1.1]{NO19}.
\end{proof}

It remains to prove Theorem~\ref{thm:L}.
The proof uses the fact that convergence in distribution is a consequence of
convergence of moment generating functions on an open interval containing $0$,
except for the critical case
\eqref{e:Lcritconv} where we use characteristic functions instead.

\begin{proof}[Proof of \eqref{e:Lsubconv}: $z=1/(sn)$ with $s>1$ (subcritical)]
Fix $s>1$ and choose $t_0>0$ to obey $se^{-t_0} =1$.  Let $t<t_0$; then $se^{-t}>1$.
Let $y=1/z$ and $y_t=ye^{-t}$.
By \eqref{e:mgf} and \eqref{e:chisub1}, the moment generating function of $L$ obeys
\begin{align}
    M_L(t) &
    \sim \frac{y_t}{y_t-n}\frac{y-n}{y}
    =
    e^{-t} \frac{s-1}{se^{-t}-1}
    = \frac{1- \frac{1}{s}}{1-\frac 1s e^t}.
\end{align}
The right-hand side is the moment generating function for $G_{1-1/s} -1$.
\end{proof}

\begin{proof}[Proof of \eqref{e:Lnearsubconv}: $z= 1/(n+an^q)$  (subcritical near critical)]
Fix $a>0$, $q \in (\frac 12, 1)$, and let $t<a$.
Let $y=1/z$ and $y_t = ye^{-t/n^{1-q}}$.
By definition, $y-n  = an^q$ and
\begin{align}
    y_t -n & \sim (n+an^q)(1-tn^{q-1}) -n \sim (a-t)n^q.
\end{align}
Since $t<a$, by \eqref{e:mgf} and \eqref{e:chisub1} the moment generating function $M_{L/n^{1-q}}(t)$ is
\begin{align}
    M_L(t/n^{1-q}) & \sim
    \frac{y_t}{y}\frac{y-n}{y_t-n}
    \sim
    e^{-t/n^{1-q}}\frac{a}{a-t} \to \frac{a}{a-t},
\end{align}
and the right-hand side is the moment generating function for $W_a$.
\end{proof}

\begin{proof}[Proof of \eqref{e:Lcritconv}: $z = 1/(n+\tau \sqrt{n})$ (critical window)]
Fix $\tau  \in \R$ and $t\in\C$.  Let $y=1/z$ and $y_t=ye^{-t/\sqrt{n}}$.
Since there is a sequence $x_n\to 0$ such that
\begin{equation}
    y_t= n + n^{1/2}(\tau-t+x_n),
\end{equation}
we see from the last member of \eqref{e:mgf} and \eqref{e:chicrit} that
\begin{align}
    M_{L/\sqrt{n}}(t) &
    \sim
    \frac{e^{(\tau-t)^2/2}{\rm erfc}(2^{-1/2}(\tau-t))}
    {e^{\tau^2/2}{\rm erfc}(2^{-1/2}\tau)},
\label{e:Mcrit}
\end{align}
which agrees with \eqref{e:mgfX}.
In particular, \eqref{e:Mcrit} holds for $t=i\theta$ (since \eqref{e:Gammacrit} does),
in which case
$M_{L/\sqrt{n}}(i\theta)$ is the characteristic function of $L/\sqrt{n}$.
Since the right-hand side
of \eqref{e:Mcrit} is a continuous function of $\theta$ in this case, the right-hand
side is the characteristic function of a random variable, and $L/\sqrt{n}$ converges
in distribution to this random variable.\footnote{The characteristic function is
useful for this last step, as general theory then provides the existence of the limiting
random variable.  With moment generating functions, this is less obvious.}
\end{proof}

\begin{proof}[Proof of \eqref{e:Lsupconvcombined}: ($z$ supercritical)]
Let $r> \frac 12$, $y=1/z$, and suppose that eventually $y \le n-n^r$.
Let $\ell = \Ex_z^{(n)}(L)$; then $\ell \sim n-y$ by \eqref{e:EL}.
Let $t \in \mathbb{R}$ and $y_t = ye^{-t/\ell}$.
Then $y_t = y+(y_t-y) \sim y - yt/\ell$.  Since eventually
\begin{align}
    \frac{y}{\ell} & \sim \frac{y}{n-y} \le \frac{y}{n^r} \le n^{1-r},
\end{align}
and since $n^{1-r} = o(n^r)$ because $r> \frac 12$,
eventually
$y_t \le n-n^r+o(n^r) \le n - n^{s}$ for some $s \in (\frac 12, r)$.
Therefore, by \eqref{e:mgf} and \eqref{e:chisup1},
\begin{align}
    M_{L/\ell}(t) = M_L(t/\ell) & \sim
    \frac{y_t^{1-n} e^{y_t}}{y^{1-n}e^y} =
    e^{t(n-1)/\ell} e^{y_t -y}.
\end{align}
The logarithm of the right-hand side is equal to
\begin{align}
    \frac{t(n-1)}{\ell} + y(e^{-t/\ell} - 1) & \sim
    \frac{t(n-1)}{\ell} - \frac{ty}{\ell}
    =
    \frac{t(n-y)}{\ell} - \frac{t}{\ell}
    \sim
    t - \frac t\ell \sim t,
\end{align}
and hence
\begin{align}
    M_L(t/\ell) & \to
    e^{t}.
\end{align}
The right-hand side is the moment generating function of the constant random variable $1$,
and the proof is complete.
\end{proof}

\section*{Acknowledgements}
This work was supported in part by NSERC of Canada, and was carried
out at the Research
Institute for Mathematical Sciences, Kyoto University.
I am grateful to Takashi Kumagai and Ryoki Fukushima
for support and hospitality in Kyoto,
to Tyler Helmuth for discussions and advice, and to Tim Garoni for informing me
of independent work on this topic with similar results which subsequently
appeared in \cite{DGGNZ19}.


\end{document}